\newcounter{defcounter}
\newtheorem{theorem}{Theorem}
\newtheorem{lemma}[theorem]{Lemma}
\newtheorem{conjecture}[theorem]{Conjecture}
\newtheorem{algorithm}[theorem]{Algorithm}
\numberwithin{equation}{section}
\def\R{{\mathbb R}}
\begin{document}

\title{A Fast Self-correcting $\pi$ Algorithm}
\author{{\Large Tsz-Wo Sze} \\
{\small szetszwo@apache.org} \\
{\small \url{https://home.apache.org/~szetszwo/}}
}
\maketitle
\begin{abstract}
We have rediscovered a simple algorithm to compute the mathematical constant
\[
\pi=3.14159265\cdots.
\]
The algorithm had been known for a long time
but it might not be recognized as a fast, practical algorithm.
The time complexity of it can be proved to be
\[
O(M(n)\log^2 n)
\]
bit operations
for computing $\pi$ with error $O(2^{-n})$,
where $M(n)$ is the time complexity to multiply two $n$-bit integers.
We conjecture that the algorithm actually runs in
\[
O(M(n)\log n).
\]
The algorithm is \emph{self-correcting} in the sense that,
given an approximated value of $\pi$ as an input,
it can compute a more accurate approximation of $\pi$ with cubic convergence.
\end{abstract}
\section{Introduction}
The computation of the mathematical constant $\pi$
has drawn a great attention from mathematicians and computer scientists
over the centuries \cite{Berggren2004, Knuth1997}.
The known asymptotically fastest algorithms for computing $\pi$ run in
\[
O(M(n)\log n)
\]
bit operations with error $O(2^{-n})$,
where $M(n)$ is the time complexity to multiply two $n$-bit integers.
The AGM algorithms \cite{Brent1976pi, Salamin1976, Borwein1987} are the only examples.
If the recent result in \cite{harvey2019} is correct,
\[
M(n) = O(n\log n).
\]
Then, the known asymptotically fastest algorithms run in
\[
O(n\log^2 n).
\]
The Chudnovsky algorithm \cite{Chudnovsky1989},
which runs in
\[
O(M(n)\log^2 n),
\]
is a popular implementation choice.
The computer program, \emph{y-cruncher}, implemented the Chudnovsky algorithm
has been used to compute $\pi$ to 31.4 trillion digits \cite{Yee2019, Iwao2019}.

In this paper,
we revisit a simple algorithm, Algorithm~\ref{alg:pi}, to compute $\pi$.
The algorithm had been known to Salamin \cite{Beeler1972}
but it might not be recognized as a fast, practical algorithm.
The time complexity of it can be proved to be
\[
O(M(n)\log^2 n).
\]
It is \emph{self-correcting} in the sense that,
given an approximated value of $\pi$ as an input,
it can compute a more accurate approximation of $\pi$ with cubic convergence.
If there is an $O(M(n)\log n)$ algorithm to compute $\sin x$ without requiring $\pi$
for any $n$-bit number $x$ with $|x|<U$, a fixed upper bound,
then the algorithm runs in $O(M(n)\log n)$.
We have the following conjecture.
\begin{conjecture}\label{conj:O(M(n)log n)}
Algorithm~\ref{alg:pi} runs in $O(M(n)\log n)$ bit operations.
\end{conjecture}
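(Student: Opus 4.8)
The plan is to reduce the conjecture to a single, well-isolated sub-problem: the cost of one evaluation of $\sin$ at full precision. Recall that Algorithm~\ref{alg:pi} is the cubically convergent self-correcting iteration $p \mapsto p + \sin p$; if $p$ agrees with $\pi$ to $m$ bits then, writing $\pi = p + e$ with $|e|\sim 2^{-m}$ and using $\sin(\pi - e) = \sin e = e - e^3/6 + \cdots$, the update gives $p + \sin p = \pi - e^3/6 + \cdots$, which agrees with $\pi$ to about $3m$ bits. First I would make this quantitative and, crucially, exploit the self-correcting property to justify a precision-tripling schedule: rather than running every iteration at the target precision $n$, I run iteration $j$ at working precision $m_j \approx 3^{\,j}$, so the final iterations use precisions $\ldots, n/9, n/3, n$. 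Because the contraction repairs the low-order error introduced by truncation, the rounding incurred at each stage is absorbed by the cubic convergence and does not accumulate.

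Next I would bound the total running time by the geometric sum over this schedule. Writing $C(n)$ for the cost of evaluating $\sin x$ to $n$ bits for a bounded argument, the dominant work per iteration is this single sine evaluation, while the additions and truncations cost $O(M(n))$ and are of lower order. The total is therefore $\sum_j C(m_j) = C(n) + C(n/3) + C(n/9) + \cdots$, and since $C$ is superlinear the ratio of consecutive terms is bounded below $1$, so the sum telescopes to $O(C(n))$. Hence the whole algorithm runs in $O(C(n))$, and the conjecture is \emph{equivalent} to the assertion $C(n) = O(M(n)\log n)$, i.e. to exhibiting an $O(M(n)\log n)$ routine for $\sin$ on a bounded interval that does not itself invoke $\pi$. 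Feeding the same reduction with the binary-splitting bound $C(n)=O(M(n)\log^2 n)$ reproduces the already-proven complexity.

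For the sub-problem I would reduce $\sin$ to the complex exponential via $\sin x = \Im\exp(ix)$, compute $\exp$ by a Newton iteration that inverts $\log$ (Newton doubles the precision, so the order is preserved provided $\log$ is itself $O(M(n)\log n)$), and compute $\log$ by the arithmetic--geometric mean, the standard route to the $O(M(n)\log n)$ bound for elementary functions.

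The hard part -- and the reason this remains a conjecture -- is that every known $O(M(n)\log n)$ evaluation of $\log$, $\exp$, or the trigonometric functions passes through the AGM, and the identity linking the AGM to the logarithm carries an explicit factor of $\pi$, for instance $\log s = \pi/\bigl(2\,\mathrm{AGM}(1,4/s)\bigr) + o(1)$ for large $s$. Using such a routine inside an algorithm whose purpose is to produce $\pi$ is circular. One might hope to break the circularity by bootstrapping -- feeding the algorithm's own running approximation of $\pi$ into the AGM -- but a brief error analysis shows this stalls at leading order: since $\pi$ enters as a multiplicative constant on an $\Theta(1)$-sized logarithm, an internal value of $\pi$ good to $m$ bits perturbs $\exp(ip)$, and hence $\sin p$, by about $2^{-m}$ in absolute value, which is no smaller than the error of the input $p$ itself; the iterate $p + \sin p$ then gains nothing. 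Thus the genuine obstacle is to produce an $O(M(n)\log n)$ algorithm for $\sin$ (equivalently for $\log$ or $\exp$) that is demonstrably free of any hidden dependence on $\pi$, and the conjecture stands or falls on this $\pi$-free fast evaluation.
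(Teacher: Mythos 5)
Your proposal does not prove the statement, and to be fair it does not claim to: what you have written is a reduction, not a proof. The statement is labelled a \emph{conjecture} in the paper precisely because the paper itself only establishes the same conditional result you arrive at. Immediately after proving the $O(M(n)\log^2 n)$ bound, the author remarks that \emph{if} there is an $O(M(n)\log n)$ algorithm to compute $\sin x$ on a bounded interval without requiring $\pi$, \emph{then} Algorithm~\ref{alg:pi} runs in $O(M(n)\log n)$ --- and that the binary-splitting methods of Karatsuba might achieve this, but no proof is known. Your first two paragraphs (the precision-tripling schedule, the geometric sum $\sum_j C(3^{\,j}) = O(C(n))$ using the superlinearity of $C$, and the conclusion that the total cost is $O(C(n))$) reproduce exactly the structure of the paper's proof of Theorem~\ref{thm:pi-time} with the sine cost left as a parameter, so that part is sound and consistent with the paper. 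One small overstatement: you claim the conjecture is \emph{equivalent} to $C(n)=O(M(n)\log n)$, but you have only shown one implication; nothing rules out the algorithm being fast for some other reason, or a faster-than-$C(n)$ total via amortization.

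The genuine gap is the sub-problem you isolate in your third paragraph and then concede you cannot solve: exhibiting a $\pi$-free $O(M(n)\log n)$ evaluation of $\sin$ (equivalently $\exp$ or $\log$). Your proposed route --- $\sin x = \Im\exp(ix)$, Newton inversion of $\log$, and AGM for $\log$ --- fails for exactly the reason you identify: the AGM--logarithm identity carries a factor of $\pi$, and your own error analysis correctly shows that bootstrapping the algorithm's running approximation of $\pi$ into the AGM stalls at leading order, since an $m$-bit-accurate internal $\pi$ perturbs $\sin p$ by $\Theta(2^{-m})$, the same magnitude as the error being corrected. This is the same circularity the paper flags when it notes that Brent's AGM sine algorithm ``cannot be used here since it requires $\pi$ as an input.'' So the conjecture remains open in your write-up just as it does in the paper; your contribution beyond the paper's own discussion is the explicit argument that the naive bootstrap cannot work, which is a useful observation but not a proof of the statement.
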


Similar to the Chudnovsky algorithm, Algorithm~\ref{alg:pi} uses binary splitting.
According to \cite{mca2010, Yee2019faq},
binary splitting has advantages over AGM including:
\begin{enumerate}
\item the implicit constants for binary splitting are smaller than the ones for AGM;
\item binary splitting can be speeded up by simultaneously summing up many terms at once
but it is difficult to speed up AGM;
\item AGM has very poor memory locality.
\end{enumerate}
Moreover, the AGM iteration is not self-correcting so that full precision is required throughout.
In contrast, the intermediate results can be truncated in Algorithm~\ref{alg:pi}.
For example, suppose the current step has computed $\pi$ in $m$ decimal places
and the next target precision is $n$ decimal places for some $n\leq 3m$.
Then the current result can be truncated to roughly $n/3$ decimal places.
Thus, Algorithm~\ref{alg:pi} potentially runs faster than the AGM algorithms in practice.

The algorithm is presented in the next section.
We discuss the $\pi$ verification problem in Section 3.
Finally, we show a family of sequences converging to $\pi\pmod{2\pi}$ in Section 4.
\section{The Computational Problem}
Let $\alpha$ be an approximated value of $\pi$ and
\begin{equation}\label{eqn:delta=pi-alpha}
\delta =\pi - \alpha
\end{equation}
be the error with
\begin{equation}\label{eqn:delta<epsilon}
|\delta| < \epsilon<1
\end{equation}
for some fixed $\epsilon$.
By the Taylor series
\begin{equation*}\label{eqn:sin-taylor}
\sin(x)
= x-\frac{x^3}{3!} + \frac{x^5}{5!} - \frac{x^7}{7!} + \frac{x^9}{9!} - \frac{x^{11}}{11!} + \dots,
\end{equation*}
it is easy to see that
\begin{equation}\label{eqn:delta-sin-alpha}
|\delta - \sin\delta| < \frac{|\delta|^3}{6} < \frac{\epsilon^3}{6}.
\end{equation}
Note that
\begin{equation}\label{eqn:sin-delta=sin-alpha}
\sin\delta = \sin\alpha.
\end{equation}
Finally, we obtain a better approximated value of $\pi$
\begin{equation}\label{eqn:alpha'}
\alpha' = \alpha + \sin\alpha
\end{equation}
such that the error
\begin{equation}\label{eqn:pi-alpha'}
|\pi - \alpha'| < \frac{\epsilon^3}{6}
\end{equation}
becomes cubic
by (\ref{eqn:delta=pi-alpha}), (\ref{eqn:delta<epsilon}),
(\ref{eqn:delta-sin-alpha}), (\ref{eqn:sin-delta=sin-alpha}) and (\ref{eqn:alpha'}).
Note that $\alpha < \pi$ implies $\alpha<\alpha' < \pi$
since the inequalities (\ref{eqn:delta<epsilon}), (\ref{eqn:delta-sin-alpha}) and (\ref{eqn:pi-alpha'})
still hold after dropped all absolute value functions.
Similarly, $\alpha > \pi$ implies $\alpha>\alpha' > \pi$.
We have proved the following theorem,
which is known to Salamin \cite{Beeler1972}.

\begin{theorem}[Cubic Convergence]\label{thm:cubic}
Let $\alpha$ be an approximated value of $\pi$ such that $|\pi-\alpha|<\epsilon<1$,
Then,
$|\pi-\alpha'|<\epsilon^3/6$,
where $\alpha' = \alpha + \sin\alpha$.
\end{theorem}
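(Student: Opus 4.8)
The plan is to reduce the entire statement to estimating the single quantity $\delta-\sin\delta$, where $\delta=\pi-\alpha$ denotes the error in the input. First I would set $\delta=\pi-\alpha$, so that the hypothesis $|\pi-\alpha|<\epsilon<1$ becomes simply $|\delta|<\epsilon<1$. The crucial observation is the reflection identity for sine: since $\alpha=\pi-\delta$, we have
\[
\sin\alpha=\sin(\pi-\delta)=\sin\delta .
\]
This identity is precisely what makes the update \emph{self-correcting}, because it lets me rewrite the computable quantity $\sin\alpha$ purely in terms of the (unknown) error $\delta$.

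Next I would compute the new error directly. Substituting the definition $\alpha'=\alpha+\sin\alpha$ and using $\sin\alpha=\sin\delta$ gives
\[
\pi-\alpha'=(\pi-\alpha)-\sin\alpha=\delta-\sin\delta .
\]
Thus the theorem reduces entirely to the bound $|\delta-\sin\delta|<\epsilon^{3}/6$. To establish it I would expand $\sin\delta$ by its Taylor series, obtaining
\[
\delta-\sin\delta=\frac{\delta^{3}}{3!}-\frac{\delta^{5}}{5!}+\frac{\delta^{7}}{7!}-\cdots .
\]
This is an alternating series whose terms are strictly decreasing in absolute value whenever $|\delta|<1$, since the ratio of consecutive terms is $\delta^{2}/\bigl((2k+2)(2k+3)\bigr)<1$. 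Hence the alternating-series estimate dominates the tail by its first term, yielding $|\delta-\sin\delta|\le|\delta|^{3}/6<\epsilon^{3}/6$, which is exactly inequality (\ref{eqn:delta-sin-alpha}). Combining this with the error identity above completes the argument.

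The only point requiring any care—and the step I would flag as the main, if modest, obstacle—is justifying that the alternating Taylor tail is controlled by its leading term, that is, verifying the monotone decrease of $|\delta|^{2k+1}/(2k+1)!$ under the hypothesis $|\delta|<1$; everything else is a direct substitution. I would also record the sign refinement noted in the surrounding text: since all three inequalities remain valid after deleting the absolute values, $\alpha<\pi$ forces $\alpha<\alpha'<\pi$, and symmetrically $\alpha>\pi$ forces $\alpha>\alpha'>\pi$, so the iterate approaches $\pi$ monotonically from whichever side it starts.
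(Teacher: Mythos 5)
Your argument is correct and follows essentially the same route as the paper: set $\delta=\pi-\alpha$, use $\sin\alpha=\sin(\pi-\delta)=\sin\delta$ to write $\pi-\alpha'=\delta-\sin\delta$, and bound that by $|\delta|^3/6<\epsilon^3/6$ via the Taylor series, plus the same sign refinement. The only difference is cosmetic: you spell out the alternating-series justification that the paper leaves as ``easy to see.''
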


We present Algorithm~\ref{alg:pi} below and then prove its time complexity.

\begin{algorithm}[Self-correcting $\pi$ Computation]\label{alg:pi}
The input is a positive integer $n$.
This algorithm returns $\alpha$ such that $\pi-\alpha< 2^{-n}$.
\end{algorithm}
\begin{enumerate}
\item[I.]
Let $\alpha_0=3$ and $m=\lceil\log_3n\rceil$.
\item[II.]
For $k=1,2,\ldots,m$,
use $(3^k)$-bit precision to compute
\[
\alpha_k=\alpha_{k-1} + \sin\alpha_{k-1}.
\]
\item[III.] Return $\alpha_m$.
\end{enumerate}

\begin{theorem}\label{thm:pi-time}
Algorithm~\ref{alg:pi} runs in $O(M(n)\log^2 n)$ bit operations.
\end{theorem}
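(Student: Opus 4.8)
The plan is to reduce the cost of each iteration to a single sine evaluation, bound the cost of that evaluation, and then sum a geometrically dominated series. First I would record the per-iteration structure. The loop runs for $m=\lceil\log_3 n\rceil=O(\log n)$ iterations, and at iteration $k$ every quantity is carried to $p_k=3^k$ bits. The iterate $\alpha_{k-1}$ and the addition $\alpha_{k-1}+\sin\alpha_{k-1}$ are $O(p_k)$-bit operations, so the addition costs $O(p_k)=O(M(p_k))$ and is negligible. Hence the cost of iteration $k$ is, up to a constant factor, the cost of one evaluation of $\sin$ on a $p_k$-bit real argument; note that $|\alpha_{k-1}|<4$ throughout, so the argument stays below a fixed bound $U$.

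The crux is the cost of evaluating $\sin x$ to $p$ bits for a full-precision real $x$ with $|x|<U$, \emph{without} using $\pi$. I would establish the bound $O(M(p)\log^2 p)$ through the binary-splitting (bit-burst) evaluation of holonomic functions as in \cite{mca2010}: since $U$ is a constant, a fixed number $O(1)$ of halvings of the argument (each costing $O(M(p))$ via the angle-halving formulas) brings $|x|$ below $1$; one then splits the fractional part of $x$ into $O(\log p)$ chunks of geometrically increasing bit-length and applies binary splitting to the Taylor series of $\sin$ and $\cos$, combining successive chunks through the addition formula. Each chunk is handled in $O(M(p)\log p)$, and there are $O(\log p)$ of them, giving $O(M(p)\log^2 p)$ per sine. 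This is the step I expect to be the main obstacle; it is also exactly the place where an $O(M(p)\log p)$ sine would lower the overall bound to $O(M(n)\log n)$, in line with Conjecture~\ref{conj:O(M(n)log n)}.

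Granting the per-iteration bound $T_k=O\bigl(M(3^k)\log^2(3^k)\bigr)=O\bigl(k^2 M(3^k)\bigr)$, I would sum over $k=1,\dots,m$. Using superlinearity of $M$ (the standard assumption that $M(n)/n$ is nondecreasing, whence $M(3^k)\ge 3\,M(3^{k-1})$ and therefore $M(3^k)\le 3^{-(m-k)}M(3^m)$), reindexing by $j=m-k$ gives
\[
\sum_{k=1}^{m}k^2 M(3^k)\le M(3^m)\sum_{j\ge 0}(m-j)^2\,3^{-j}=O\bigl(m^2 M(3^m)\bigr),
\]
since $\sum_{j\ge 0}3^{-j}$ converges. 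Finally $3^m<3n$ yields $M(3^m)=O(M(n))$ by the regularity of $M$, and $m^2=O(\log^2 n)$, so the total running time is $O(M(n)\log^2 n)$. The precision schedule $p_k=3^k$ and the step count $m=\lceil\log_3 n\rceil$ that these estimates rely on are exactly those guaranteed correct by the cubic convergence of Theorem~\ref{thm:cubic}, so no separate argument about the number of iterations is needed.
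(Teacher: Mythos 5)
Your proposal is correct and follows essentially the same route as the paper: reduce iteration $k$ to one binary-splitting SinCos evaluation at $3^k$-bit precision on a bounded argument (handled by $O(1)$ halvings and doubling/addition formulas), costing $O(M(3^k)k^2)$, and then sum over the $O(\log n)$ iterations. You in fact supply two details the paper leaves implicit --- the internal bit-burst structure of the $O(M(p)\log^2 p)$ sine evaluation, and the superlinearity-of-$M$ argument showing $\sum_{k\le m}k^2M(3^k)=O(m^2M(3^m))$ rather than the naive $O(m^3M(n))$ --- but the underlying approach is the same.
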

\begin{proof}
The main ingredient of Algorithm~\ref{alg:pi} is to compute $\sin\alpha_k$.
For $x$ a $\ell$-bit number with $0<x<1/2$,
both $\sin x$ and $\cos x$ can be computed
using the $O(M(\ell)\log^2 \ell)$ binary splitting SinCos algorithm \cite{mca2010}.
Therefore,
$\sin(\alpha_k/8)$ and $\cos(\alpha_k/8)$ can be computed in $O(M(3^k)k^2)$.
Then use the doubling formulae
\begin{align*}
\sin(2x) &= 2\sin x\cos x, \\
\cos(2x) &= 1-2\sin^2 x
\end{align*}
to compute
$\sin(\alpha_k/4$), $\cos(\alpha_k/4$),
$\sin(\alpha_k/2$), $\cos(\alpha_k/2$),
and, finally, $\sin\alpha_k$.
The time complexity to compute  $\sin\alpha_k$ is $O(M(3^k)k^2)$.
The time complexity of Algorithm~\ref{alg:pi} is then $O(M(n)\log^2 n)$.
\end{proof}

If there is an $O(M(n)\log n)$ algorithm to compute $\sin x$ without requiring $\pi$
for any $n$-bit number $x$ with $|x|<U$, a fixed upper bound,
then Algorithm~\ref{alg:pi} runs in $O(M(n)\log n)$.
We aware that the binary splitting algorithms described in \cite{Karatsuba1991, Karatsuba2001}
may be able to compute sine and cosine in $O(M(n)\log n)$.
Unfortunately, we do not have a proof so that we have Conjecture \ref{conj:O(M(n)log n)}.

Note that the AGM sine algorithm \cite{Brent1976agm},
which runs in $O(M(n)\log n)$,
cannot be used here
since it requires $\pi$ as an input.
Note also that the binary splitting algorithm can be used to compute $\pi$ directly \cite{Karatsuba1991}.
However, the time complexity is $O(M(n)\log^2 n)$.

\subsection{A Numerical Example}
The following example has been computed by PARI/GP \cite{PARI} and GMP \cite{GMP}.
We simply have used the sine function provided by PARI/GP.
In the table below,
$\alpha_k$ is the approximated value of $\pi$ in iteration $k$,
$\epsilon_k$ is an upper bound of the error
and $n_k$ is the precision in $\alpha_{k+1}$,
where
\begin{align*}
\alpha_k &= \alpha_{k-1} + \sin\alpha_{k-1}, \qquad\alpha_0=3, \\
\epsilon_k &= (\pi-\alpha_k^3)/6, \\
n_k &= \lfloor-\log_{10}\epsilon_k\rfloor.
\end{align*}
\begin{center}
\begin{tabular}{ | l | l | l | l | l | }
\hline
 $k$ & $\alpha_k$ & $\epsilon_k$ & $n_k$ & $\sin\alpha_k$ \\
\hline
 0 & 3            & $4.73\cdot10^{-4}$  & 3  & \underline{0.141}120008059867222100744802808110 \\
 1 & 3.141        & $3.47\cdot10^{-11}$ & 10 & \underline{0.0005926535}55099468066916718249636 \\
 2 & 3.1415926535 & $1.21\cdot10^{-31}$ & 30 & \underline{0.000000000089793238462643383279}382 \\
\hline
\end{tabular}
\end{center}
We have the following sequence converging to $\pi$,
\begin{align*}
 \alpha_0 &= 3, \\
 \alpha_1 &= 3.141, \\
 \alpha_2 &= 3.1415926535, \\
 \alpha_3 &= 3.141592653589793238462643383279, \\
      \pi &= 3.14159265358979323846264338327950\cdots.
\end{align*}
\section{The Decision Problem}

Let $\alpha$ with $n$ decimal places be a computed value of $\pi$.
How to verify if the digits are correct?
In other words, verify if
\begin{equation}\label{eqn:decision}
10^n\alpha = \left\lfloor10^n\pi\right\rfloor.
\end{equation}
It is interesting to ask if the decision problem,
i.e.\ checking (\ref{eqn:decision}) for a given $\alpha$ in $n$ decimal places,
is easier than the computational problem,
i.e.\ computing $\pi$ in $n$ decimal places.
An algorithm deciding (\ref{eqn:decision}) asymptotically faster than computing $\pi$ has not been discovered.

The self-correcting step in Algorithm~\ref{alg:pi} can be used for verification.
Split
\[
\alpha=\alpha_H + \alpha_L \cdot 10^{-m}
\]
into higher digits and lower digits for some $m>n/3$
such that
\[
\alpha'=\alpha_H + \sin\alpha_H
\]
is expected to have a few more correct digits than $\alpha$.
Check if all the digits in $\alpha$ match $\alpha'$.

In practice,
after $\pi$ is computed in $n$ decimal places by an algorithm,
a different algorithm or the same algorithm with a different set of parameters is used to verify the result.

The $\pi$ result mentioned in the introduction has $\lfloor\pi\cdot10^{13}\rfloor$
decimal digits\footnote{Note that $\lfloor\pi\cdot10^{13}\rfloor=31,415,926,535,897$.}
and 26,090,362,246,629 hexadecimal digits \cite{Yee2019, Iwao2019}.
The computation used the Chudnovsky algorithm.
For verification,
the Bailey–Borwein–Plouffe (BBP) formula \cite{Bailey1997}
and also the Bellard's improved BBP formula \cite{Bellard1997}
were used to compute 48 hexadecimal digits starting at the 26,090,362,246,601st position.
There were 29 hexadecimal digits,
\[
\mbox{from }\quad26,090,362,246,601\mbox{st}\quad\mbox{to}\quad26,090,362,246,629\mbox{th},
\]
agreed in all three results
from Chudnovsky, BBP and Bellard.

In 2010, we computed the two quadrillionth bit of $\pi$ \cite{Sze2011} using Bellard's formula.
Two computations at two different bit positions,
\[
1,999,999,999,999,993\mbox{rd}\quad\mbox{ and }\quad1,999,999,999,999,997\mbox{th},
\]
were executed.
There were 256 bits agreed in both computations.
\section{Convergent Sequences}
We extend Theorem~\ref{thm:cubic} to 
show a family of sequences converging to $\pi\pmod{2\pi}$ in this section.

\begin{lemma}\label{lemma:sequence}
For $k\geq 1$, define
\begin{align*}
a_{k+1} &= a_k + \sin a_k,
\end{align*}
where $0<a_0<\pi$.
Then,
\begin{align*}
\lim_{k\rightarrow\infty} a_k = \pi.
\end{align*}
\end{lemma}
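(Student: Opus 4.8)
The plan is to read the recurrence as the fixed-point iteration $a_{k+1}=f(a_k)$ for the map $f(x)=x+\sin x$ and to establish convergence via the monotone convergence theorem, rather than by estimating a contraction rate. First I would record two elementary properties of $f$ on the open interval $(0,\pi)$: that $\sin x>0$ there, and that $f'(x)=1+\cos x>0$ there, so that $f$ is strictly increasing on $[0,\pi]$. The crucial boundary fact is the fixed-point identity $f(\pi)=\pi+\sin\pi=\pi$, which by Theorem~\ref{thm:cubic} is exactly the $\alpha\mapsto\alpha'$ step with $\alpha=\pi$.

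Next I would show, by induction on $k$ starting from the hypothesis $0<a_0<\pi$, that the interval $(0,\pi)$ is forward-invariant and that the sequence increases. If $a_k\in(0,\pi)$, then $\sin a_k>0$ gives $a_{k+1}=a_k+\sin a_k>a_k>0$, so the sequence is strictly increasing and stays positive; and since $f$ is strictly increasing with $f(\pi)=\pi$, the bound $a_k<\pi$ yields $a_{k+1}=f(a_k)<f(\pi)=\pi$. Hence every term satisfies $0<a_k<\pi$, and $(a_k)$ is strictly increasing and bounded above by $\pi$.

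The limit is then identified by a standard argument: by monotone convergence the limit $L=\lim_{k\to\infty}a_k$ exists and satisfies $a_0<L\leq\pi$. Passing to the limit in $a_{k+1}=a_k+\sin a_k$ and using continuity of $\sin$ gives $L=L+\sin L$, hence $\sin L=0$. Since the only zero of $\sin$ in $(0,\pi]$ is $\pi$ itself, we conclude $L=\pi$.

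The one point requiring genuine care, and the step I would treat as the main obstacle, is verifying that the iteration never overshoots $\pi$, i.e.\ the forward-invariance of $(0,\pi)$; this rests precisely on the monotonicity of $f$ together with $f(\pi)=\pi$, since without monotonicity one could fear that a single step adds too much and lands beyond $\pi$. Everything else—positivity, boundedness, and identification of the limit—is routine. As a remark linking back to the earlier analysis, I would note that $f'(\pi)=1+\cos\pi=0$, so the convergence is in fact faster than linear near $\pi$, consistent with the cubic rate of Theorem~\ref{thm:cubic}, although only plain convergence is needed here.
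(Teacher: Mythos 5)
Your proof is correct, and it takes a genuinely different route from the paper's. You argue via the monotone convergence theorem: the map $f(x)=x+\sin x$ is strictly increasing on $[0,\pi]$ (since $f'(x)=1+\cos x>0$ on the interior), fixes $\pi$, and therefore keeps $(0,\pi)$ forward-invariant, so the sequence is increasing and bounded above by $\pi$; its limit $L$ must satisfy $\sin L=0$ with $L\in(0,\pi]$, forcing $L=\pi$. The paper instead proves only that the sequence eventually enters the interval $(\pi-1,\pi)$ --- by a contradiction argument ruling out a supremum $U\leq\pi-1$ (splitting into the cases $U<\pi/2$ and $\pi/2\leq U<\pi-1$ to bound $\sin a_k$ away from zero) and a separate check that the first term past $\pi-1$ does not overshoot $\pi$ --- and then hands off to Theorem~\ref{thm:cubic} to finish. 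Your argument is shorter and avoids the paper's case analysis entirely, at the cost of giving no rate information; the paper's version is structured to land the iterates inside the basin where the cubic-convergence estimate $|\pi-\alpha'|<\epsilon^3/6$ applies, which is the quantitative fact the algorithm actually relies on. You correctly identify the one delicate point, namely that the iteration never overshoots $\pi$; your resolution via monotonicity of $f$ together with $f(\pi)=\pi$ is cleaner than the paper's endpoint bookkeeping with $\sin(a_{k_0-1})\leq 1$.
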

\begin{proof}
We will show
\begin{align}\label{eqn:pi-1<a_k<pi}
\pi-1<a_k<\pi
\mbox{ for some }
k\geq 0.
\end{align}
Then Theorem~\ref{thm:cubic} implies
\begin{align*}
\lim_{k\rightarrow\infty} a_k = \pi.
\end{align*}

Now we show (\ref{eqn:pi-1<a_k<pi}).
If $a_0>\pi-1$, we are done.
Assume $0<a_0\leq\pi-1$.
There exists the least integer $k_0>0$ such that $a_{k_0}>\pi-1$.
If not,
let $U\leq\pi-1$
be the least upper bound of $\{a_k\}$.
Let
\[
\Delta_k = a_{k+1}-a_k = \sin a_k.
\]
Since $\{a_k\}$ is bounded above by $\pi-1$,
we have $a_k>0$ and $\Delta_k>0$ for all $k$.
Since $\{a_k\}$ is increasing with the least upper bound $U$,
\begin{align}\label{eqn:lim-a_k=U}
\lim_{k\rightarrow\infty} a_k=U.
\end{align}
However, (\ref{eqn:lim-a_k=U}) is contradiction
since if $U<\pi/2$, $\Delta_k$ is increasing;
otherwise,
$\Delta_k\geq\sin(\pi - 1)$ for large enough $k$ if $\pi/2\leq U<\pi-1$.
Therefore,
\[
a_{k_0}>\pi-1.
\]
Since $k_0$ is the least integer,
$a_{k_0-1}\leq \pi-1$.
If $a_{k_0-1}=\pi-1$,
we have $\sin(a_{k_0-1})<1$;
otherwise, $a_{k_0-1}<\pi-1$ and $\sin(a_{k_0-1})\leq 1$.
In both cases,
\[
a_{k_0}=a_{k_0-1}+\sin(a_{k_0-1})<\pi.
\]
\end{proof}

For any $a,b,x\in\R$ with $x>0$, define
\[
a \equiv b \pmod{x}
\]
if and only if
\[
a - b = nx
\]
for some integer $n$.
We show a more general theorem below.

\begin{theorem}[Convergent Sequences]\label{thm:sequence}
For any $a_0\in\R$ such that
\begin{align*}
a_0 &\not\equiv 0 \pmod{2\pi}.
\end{align*}
For $k\geq 1$, define
\begin{align*}
a_{k+1} &= a_k + \sin a_k.
\end{align*}
Then,
\begin{align}\label{eqn:lim-a_k}
\lim_{k\rightarrow\infty} a_k
\equiv \pi \pmod{2\pi}.
\end{align}
\end{theorem}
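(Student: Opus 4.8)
The plan is to reduce the general statement to Lemma~\ref{lemma:sequence} by exploiting two equivariance properties of the map $f(x) = x + \sin x$. First I would observe that $f$ commutes with translation by $2\pi$: since $\sin(x+2\pi)=\sin x$, we have $f(x+2\pi)=f(x)+2\pi$. Writing $a_0 = b_0 + 2\pi N$ with $N\in\mathbb{Z}$ and $b_0\in[0,2\pi)$, an easy induction then gives $a_k = b_k + 2\pi N$ for all $k$, where $b_{k+1}=b_k+\sin b_k$. Hence $\lim_{k\to\infty} a_k \equiv \lim_{k\to\infty} b_k \pmod{2\pi}$, and it suffices to show $\lim_{k\to\infty} b_k = \pi$. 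The hypothesis $a_0\not\equiv 0\pmod{2\pi}$ is precisely what guarantees $b_0\neq 0$, so $b_0\in(0,2\pi)$.

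Next I would split into three cases according to the position of $b_0$ relative to $\pi$. If $b_0\in(0,\pi)$, Lemma~\ref{lemma:sequence} applies verbatim and gives $\lim_{k\to\infty} b_k = \pi$. If $b_0=\pi$, then $\sin b_0 = 0$, so $\pi$ is a fixed point and $b_k=\pi$ for all $k$. The remaining case $b_0\in(\pi,2\pi)$ is handled by the second symmetry: since $\sin(2\pi-x)=-\sin x$, the reflection $x\mapsto 2\pi-x$ about $\pi$ commutes with $f$, that is, $f(2\pi-x)=2\pi-f(x)$. Setting $c_k = 2\pi - b_k$, this identity shows $c_{k+1}=c_k+\sin c_k$ with $c_0=2\pi-b_0\in(0,\pi)$. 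Lemma~\ref{lemma:sequence} then yields $\lim_{k\to\infty} c_k = \pi$, whence $\lim_{k\to\infty} b_k = 2\pi - \pi = \pi$.

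Combining the three cases, $\lim_{k\to\infty} b_k = \pi$ in every case, and therefore $\lim_{k\to\infty} a_k = \pi + 2\pi N \equiv \pi \pmod{2\pi}$, which is~(\ref{eqn:lim-a_k}).

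I do not expect any single step to be a serious obstacle, since the heart of the argument, the convergence on $(0,\pi)$, is already established in Lemma~\ref{lemma:sequence}. The only care needed is bookkeeping at the boundaries: confirming that the reduced point lands in the \emph{open} interval $(0,\pi)$ in the reflection case (so that the Lemma, which requires $0<c_0<\pi$, genuinely applies), isolating the fixed point $\pi$, and verifying that the excluded hypothesis $a_0\equiv 0\pmod{2\pi}$ coincides exactly with the one reduced starting value $b_0=0$ for which the argument would break down.
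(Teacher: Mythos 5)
Your proposal is correct and follows essentially the same route as the paper: reduce modulo $2\pi$ to a representative in a fundamental domain, dispose of the fixed point $\pi$ separately, and handle the half of the domain not covered by Lemma~\ref{lemma:sequence} via a symmetry of $\sin$ (you reflect about $\pi$ using $\sin(2\pi-x)=-\sin x$, the paper negates using $\sin(-x)=-\sin x$; these are the same symmetry up to the $2\pi$-translation). No gaps.
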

\begin{proof}
If $a_0\equiv\pi\pmod{2\pi}$, it is trivial.
Assume $a_0\not\equiv\pi\pmod{2\pi}$.

Let $n=\lfloor (a_0+\pi)/2\pi\rfloor$ and
\begin{align*}
b_0 &= a_0 - 2n\pi.
\end{align*}
We have $0<|b_0|<\pi$.
For $k\geq 1$, define
\begin{align*}
b_{k+1} &= b_k + \sin b_k.
\end{align*}
It is obvious that, for $k\geq 0$,
\begin{align*}
a_k &= 2n\pi + b_k.
\end{align*}

If $b_0>0$,
Lemma~\ref{lemma:sequence} implies
$\lim_{k\rightarrow\infty} b_k = \pi$.
Then
\begin{align*}
\lim_{k\rightarrow\infty} a_k
= 2n\pi + \lim_{k\rightarrow\infty} b_k
= (2n+1)\pi.
\end{align*}
Suppose $b_0<0$.
Let $c_0 = -b_0$ so that $0<c_0<\pi$.
For $k\geq 1$, define
\begin{align*}
c_{k+1} &= c_k + \sin c_k.
\end{align*}
Lemma~\ref{lemma:sequence} implies
$\lim_{k\rightarrow\infty} c_k =\pi$.
Since $c_k=-b_k$ for all $k$,
\[
\lim_{k\rightarrow\infty} a_k
= 2n\pi + \lim_{k\rightarrow\infty} b_k
=2n\pi -\lim_{k\rightarrow\infty} c_k
=(2n-1)\pi.
\]
\end{proof}

\bibliographystyle{plain}
\bibstyle{plain}
\bibliography{self-correcting-pi}
\end{document}